\newtheorem{proposition}{Proposition}
\newtheorem{theorem}{Theorem}
\newtheorem{corollary}{Corollary}
\newtheorem{remark}{Remark}
\begin{document}
\title[A note on transverse divergence and taut Riemannian foliations]{A note on transverse divergence and taut Riemannian foliations}

\author[Vladimir Slesar]{Vladimir Slesar}

\date{}

\abstract
 In this note we give a characterization of taut Riemannian foliations using the transverse divergence. This result turns out to be a convenient tool in the case of some standard examples. Furthermore, we show that a classical tautness result of Haefliger can be obtained in our particular setting as a straightforward consequence. In the final part of the paper we obtain a tautness characterization for transversally oriented foliations with dense leaves.\\ \\
{\bf Key Words:} Riemannian foliations; transverse divergence; taut foliations.
\endabstract

\maketitle

\begin{center}
Mathematics Subject Classification (2010): 57R30, 55R10, 58J50.
\end{center}

\section{Introduction}
\label{intro}
Throughout this paper we consider a $C^\infty $ foliated manifold
$\left( M,\mathcal{F}\right) $, endowed with a Riemannian metric $g$. A particular
class of foliations with remarkable dynamical and geometric properties is
represented by the taut foliations. A foliation is called \emph{taut} if
there exists a metric on $M$ such that every leaf of $\mathcal{F}$ is a
minimal submanifold of the ambient manifold $M$. The work of Sullivan \cite
{Sull} and later of Haefliger \cite{Hae} on compact foliated manifolds offered a
characterization of these aspects, showing that tautness is
basically related to the transverse geometry of the foliation.

An usual additional condition for a foliation is the existence of a
\emph{bundle-like} metric; with respect to this metric the foliation can be locally
identified with a Riemannian submersion \cite{Re}. This particular class of foliations are called
\emph{Riemannian foliations}.  Examples of this type occur for instance in the theory of locally conformally K\"ahler manifolds and their submanifolds (see \textrm{e.g.} \cite[page 44]{Drag-Ornea}, \cite{Vilcu}), in the case of Sasakian manifolds \cite[Subsection 1.1]{Bo-Ga}, while the
characteristic leafwise holonomy invariance is relevant from the point of view of
Yang-Mills equations (see \textrm{e.g.} \cite{I-S-V-V} for further references related to particle physics).

For Riemannian foliations the tautness properties has been intensively studied;
for a chronological and more detailed presentation we
refer to \cite{Prieto-Sar-Wolak}. Remarkably, the tautness depends on the topology
of the manifold, and a characterization in terms of the basic cohomology is
possible. More exactly, if the manifold $M$ is closed (i.e. compact and
without boundary), then it can be shown that the foliation is taut if and
only if a certain basic cohomology class $\xi (\mathcal{F})$-called
\emph{\'Alvarez class}, is trivial \cite{Al}. If, furthermore, the foliation is
transversally orientable, then it is taut if and only if the top dimensional
cohomology group does not vanish \cite{Al,Mas}.

As a consequence, in many papers the study of taut
foliations is based on the topological and cohomological aspects. Thus,
when investigating particular examples, beside a direct study of the basic
cohomology groups (see \textrm{e.g.} \cite{Car}), a tautness result via some adapted vanishing techniques can be expected at most in the case of foliations with positively defined transverse curvature operator (see \cite{Al,Mi-Ru-To}).

In the present note we take a different approach, establishing a new
characterization of taut Riemannian foliations using the \emph{transverse divergence
operator}. From the dynamical point of view, a foliation can be regarded as a
generalization of a differential manifold (the classical setting is
obtained in the absolute case when the manifold is foliated by points). On a
closed Riemannian manifold we cannot construct vector fields with positive,
non-vanishing divergence, as a trivial consequence of the Green theorem.
Considering the transverse geometry of the foliations as a generalization for
the geometry of the manifolds and the transverse divergence as an extension
of the classical differential operator, we prove in this note
that taut Riemannian foliations are precisely the extended
framework of Riemannian manifolds where the above classical fact still
holds.

As this result does not rely on basic cohomology and Bochner technique, we show that it can be applied
on classical examples with non-positive transverse curvature operator.

It also turns out that some classical statements from the theory of taut Riemannian
foliations can be derived as straightforward consequences of our analytical
characterization. In particular, a tautness result of Haefliger
\cite{Hae}, asserting that the existence of a closed transverse submanifold
ensures the tautness is proved for our particular framework, revealing an
interesting link between this result and the Green theorem. Also,
an important extension to the finite Riemannian covering spaces of the
tautness properties proved in \cite{Al} can be alternatively derived using
our arguments in a simpler manner.

In the final we give a tautness characterization for transversely oriented Riemannian foliations with
dense leaves using the transverse volume form.

The paper is organized as follows: in the second section we define the main
geometric objects we use in this note and briefly state the relevant
previous results. The central result of the note is proved in the third
section, while in the last part we present the above straightforward applications.

\section{Preliminaries}
\label{sec:1}
We start out this section considering a differential manifold $M$ endowed with a Riemannian metric structure $g$
and with a non-necessary integrable distribution $D$ of dimension $d$.  In the following we use
the classical musical isomorphisms $\sharp $ and $\flat$. Let
$\left\{ {E}_i\right\} _{1\le i\le d}$ and $\left\{ {E}_i^{\flat }\right\} _{1\le
i\le d}$ be a local orthonormal frame and coframe for $D$ with respect to the metric $g$.

The action of the \emph{divergence operator associated to $D$} on a vector field ${v}\in \Gamma \left(
TM\right) $ is defined using the Levi-Civita connection $\nabla $
\begin{equation}
\mathrm{div}^D {v}:=\sum\limits_{i=1}^dg\left( \nabla _{{E}_i}{v},{E}_i\right)\;.
\label{divergenta_general}
\end{equation}

In the following, we briefly present the interplay between the divergence
operator and a (locally defined) volume form $\nu _D$ of $D$ induced in the canonical way by the metric $g$.
According to \cite{Al-Ko} (see also \cite[Chapter 4]{To}), if $\mathcal{L}_{{v}}$ is the Lie derivative
along the vector field $v$, we consider the restriction
$\theta _{v}:={\mathcal{L}_{v}}_{\mid D}$; for instance, if $1\le i,j\le d$, then
\begin{eqnarray*}
{\mathcal{L}_{v}} {E}_i^{\flat }( E_j) &=&v( {E}_i^{\flat }({E}_j) ) -{E}_i^{\flat }\left( [{v},{E}_j]\right) \\
&=&-g( [{v},{E}_j],{E}_i)\;,
\end{eqnarray*}
and, consequently
\[
\theta_{{v}} {E}_i^{\flat }=-\sum_{j=1}^dg\left( [{v},{E}_j],{E}_i\right) {E}_j^{\flat }\;.
\]

Now, taking $\nu _D={E}_1^{\flat }\wedge ..\wedge {E}_d^{\flat }$, as the
Levi-Civita connection has vanishing torsion, we get
\begin{eqnarray}
\theta _{{v}}\nu _D &=&\theta _{{v}} {E}_1^{\flat }\wedge ..\wedge E_d^{\flat}+..
+ {E}_1^{\flat }\wedge ..\wedge \theta _{{v}} {E}_d^{\flat }  \nonumber \\
\ &=&-\sum_{j=1}^dg\left( [{v}, {E}_j], {E}_j\right) \nu _D  \nonumber \\
\ &=&\mathrm{div}^D {v}\cdot \nu _D \;.  \label{div_vol}
\end{eqnarray}

\begin{remark}
In the classical case when $D\equiv TM$, we get the well-known equivalent definition for the divergence operator
(see e.g. \cite[page 140]{Marsden-Ratiu})
\[
{\mathcal{L}_{{v}}}\nu_{TM}=\mathrm{div}^{TM} {v}\cdot \nu_{TM}\;.
\]
\end{remark}

Considering the complementary distribution $D^{\perp }$ with respect to the
metric $g$ and taking the canonical projections $\pi _D$ and $\pi _{D^{\perp}}$, if ${v}\in D^{\perp }$, then it is easy to see that
\begin{equation}
\theta _{{v}}\nu _D =-g( {v},\pi _{D^{\perp }}(\sum_{j=1}^d \nabla _{{E}_j}{E}_j)) \nu _D\;.
\label{div_vol1}
\end{equation}

Using the musical isomorphism $\sharp$ we can define the \emph{mean curvature vector field} ${{\kappa}}_{D} ^{\sharp }:=
\pi_{D^{\perp }}( \sum_{j=1}^d\nabla _{{E}_j}{E}_j) $, the \emph{mean
curvature form }${{\kappa}}_{D} $ being then subject to the condition
${{\kappa}}_{D}({U})=g({{\kappa}}_{D}^{\sharp },{U}) $, for any vector field ${U}$ (see \textrm{e.g.} \cite{Wal}).

After we made the above considerations in a larger setting, from now on the framework of this paper will be
represented by a smooth, closed Riemannian foliation $\left(M, \mathcal{F},g \right)$.
The \emph{leafwise} distribution tangent to leaves will be
denoted by $T\mathcal{F}$; a corresponding \emph{transverse} distribution
$Q=T\mathcal{F}^{\perp }\simeq TM/T\mathcal{F}$ is obtained. We assume
$\dim M=n$, $\dim T\mathcal{F}=p$ and $\dim Q=q$, with $p+q=n$.

A first consequence is the splitting of the tangent and the cotangent vector
bundles associated with $M$
\begin{eqnarray*}
TM &=&Q\oplus T\mathcal{F}, \\
TM^{*} &=&Q^{*}\oplus T\mathcal{F}^{*}.
\end{eqnarray*}

For local investigation of the transverse geometry of the foliated manifold
we will use local orthonormal basis $\left\{ {E}_i,{F}_a\right\} $ defined on a
neighborhood of a point $p\in M$; $\left\{ {E}_i\right\} _{1\le i\le q}$
will span the distribution $Q$ while $\left\{ {F}_a\right\} _{1\le a\le p}$
will span the distribution $T\mathcal{F}$. It is also convenient to employ
\emph{basic (projectable) local vector fields} $\{{E}_i\}_{1\le i\le q}$, defined by the condition \cite[page 4]{To}
\begin{equation}
\pi _Q\left([{U},{E}_i]\right)=0  \label{basic vector field}
\end{equation}
for any ${U}\in \Gamma (T\mathcal{F})$.

Another important transverse geometric object is represented by the
\emph{basic de Rham complex}. It is defined as a restriction of the set of
classical differential forms $\Omega \left( M\right) $ \cite[page 33]{To}

\[
\Omega _b\left( M\right) :=\left\{ \omega \in \Omega \left( M\right) \mid
\iota _{{U}}\omega =\mathcal{L}_{{U}}\omega =0\right\}\;,
\]
where ${U}$ is again an arbitrary leafwise vector field, while $\iota $ stands
for interior product. The corresponding \emph{basic de Rham derivative} $d_b$
comes as a restriction of the classical de Rham derivative, $d_b:=d_{\mid
\Omega _b\left( M\right) }$. It is also possible to define as well the adjoint operator,
namely the \emph{basic co-derivative} $\delta _b$ (see \textrm{e.g.} \cite{Al}).
The \emph{basic cohomology groups} of the basic de Rham
complex $\left( \Omega _b\left( M\right) ,d_b\right) $ are defined in a
classical way \cite[page 33]{To}) and are denoted by $H_b^i(\mathcal{F)}$, with $1\le i\le q$.

One differential form of particular importance (which may not be necessarily
a basic differential form) is the mean curvature form associated to the
leafwise distribution $T\mathcal{F}$. We denote it by ${{\kappa}}$, in accordance with \cite{Al,Rum}.
As above, it is subject to the relation
${{\kappa}} ^{\sharp }:=\pi _Q\left( \sum_{a=1}^p\nabla_{{F}_a}{F}_a\right)$.

If the distribution $D\equiv Q$, then using (\ref{divergenta_general}) we obtain the
\emph{transverse divergence operator }$\mathrm{div}^Q$ \cite[page 42]{To}. We state here
the following useful relation
\begin{equation}
\mathrm{div}^Q {v}=\mathrm{div}\,{v}+g\left( {v},{{\kappa}} ^{\sharp }\right)\;.
\label{diverg tondeur}
\end{equation}
which holds for any vector field ${v}\in \Gamma (TM)$.

A deeper investigation of the mean curvature form on Riemannian foliations is
presented in \cite{Al}; the author shows that we have the orthogonal
decomposition
\[
\Omega \left( M\right) =\Omega _b\left( M\right) \oplus \Omega _b\left(
M\right) ^{\perp }\;,
\]
with respect to the $C^\infty -$Fr\'echet topology. Thus, on any Riemannian
foliation, the mean curvature form can be decomposed as the sum
\[
{{\kappa}} ={{\kappa}} _b+{{\kappa}}_o \;,
\]
where ${{\kappa}} _b\in \Omega _b\left( M\right) $ is the $\emph{basic}$
\emph{component of the mean curvature}, ${{\kappa}} _o$ being the orthogonal
complement. From now on we denote ${\tau} :={{\kappa}} _b^{\sharp }$.

\begin{remark}
A fundamental property of ${{\kappa}} _b$ is that it is a closed differential basic form,
the corresponding basic cohomology class being denoted by $\xi (\mathcal{F})$;
this cohomology class (called \emph{\'Alvarez class} \cite{Noz,Noz-Prieto})
is independent on the metric $g$.
\end{remark}

We state now a remarkable characterization of the taut Riemannian
foliation using basic cohomology obtained in \cite{Al}.

\begin{theorem}[\'Alvarez L\'opez \cite{Al}]
\label{theorem alvarez}A Riemannian foliation $F$ on a compact manifold is
taut if and only if $\xi (\mathcal{F})=0$. Moreover, when $\mathcal{F}$ is
transversely oriented, the foliation is taut if and only if $H_b^q(\mathcal{F})\not =0$.
\end{theorem}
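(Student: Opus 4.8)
The plan is to pass from the geometric tautness of $\mathcal{F}$ to a statement about the mean curvature form $\kappa$, and from there into basic cohomology, letting two structural results of the theory do the heavy lifting. The first ingredient is Rummler's criterion \cite{Rum}: if $\chi_{\mathcal{F}}\in\Omega^{p}(M)$ is the characteristic form of $\mathcal{F}$ (the $p$-form restricting to the leafwise volume on each leaf and annihilating $Q$), then $d\chi_{\mathcal{F}}=-\kappa\wedge\chi_{\mathcal{F}}+\varphi$ with $\varphi$ vanishing whenever $p-1$ of its arguments are tangent to $\mathcal{F}$, and the leaves of $\mathcal{F}$ are minimal for $g$ if and only if $\kappa\equiv 0$. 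Thus ``$\mathcal{F}$ is taut'' means ``some metric on $M$ makes the leafwise mean curvature form vanish''. Next I would reduce to bundle-like metrics by invoking the tenseness theorem for Riemannian foliations on closed manifolds \cite{Al}: there is a bundle-like metric $g$ with $\kappa=\kappa_{b}$ basic, i.e. $\kappa_{o}=0$. For such a metric $\kappa_{b}$ is $d_{b}$-closed (the Remark above) and $\xi(\mathcal{F})=[\kappa_{b}]\in H_{b}^{1}(\mathcal{F})$ does not depend on the chosen tense bundle-like metric.

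For the implication $\xi(\mathcal{F})=0\ \Rightarrow\ \mathcal{F}$ taut I would argue by deformation. Starting from a tense bundle-like $g$, write $\kappa=\kappa_{b}=d_{b}f$ with $f\in\Omega_{b}^{0}(M)$; keep the metric on $Q$ and the orthogonal splitting $TM=Q\oplus T\mathcal{F}$, and rescale $g_{\mid T\mathcal{F}}$ by the basic conformal factor $e^{2f/p}$. Since $f$ is basic the new metric is again bundle-like, its characteristic form is $e^{f}\chi_{\mathcal{F}}$, and substitution in Rummler's formula shows that a leafwise conformal change by a basic function $u$ transforms the mean curvature form by $\kappa\mapsto\kappa-p\,d_{b}u$; with $u=f/p$ the new mean curvature form is $\kappa-d_{b}f=0$, so the leaves are now minimal and $\mathcal{F}$ is taut. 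Beginning from a tense metric is essential: the deformation only shifts $\kappa$ by the basic exact form $-p\,d_{b}u$, so $\kappa_{o}$ stays $0$, whereas the same move applied to a non-tense metric would leave a nonzero orthogonal part and the leaves would fail to be minimal.

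The converse $\mathcal{F}$ taut $\Rightarrow\ \xi(\mathcal{F})=0$ is the delicate direction, since a metric witnessing tautness need not be bundle-like. Here I would show that a taut Riemannian foliation on a closed manifold admits a bundle-like taut metric: by the Rummler--Sullivan criterion \cite{Sull} tautness yields a closed $q$-form positive on $Q$; averaging it along the leaf closures (using Molino's structure theory and the transverse invariant measure) produces a closed basic $q$-form still positive on $Q$; and feeding this back through Rummler--Sullivan gives a bundle-like taut metric, for which $\kappa=0$, hence $\kappa_{b}=0$ and $\xi(\mathcal{F})=[\kappa_{b}]=0$. Together with the previous paragraph this proves the first assertion (the non-transversely-orientable case reduces to the orientable one by passing to the transverse orientation double cover, on which $\xi$ pulls back injectively). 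For the refinement, suppose $\mathcal{F}$ is transversely oriented and invoke the twisted Poincar\'e duality for the basic complex \cite{Al,Mas}: $\dim H_{b}^{i}(\mathcal{F})=\dim H^{q-i}\big(\Omega_{b}(M),\,d_{b}-\kappa_{b}\wedge\,\cdot\,\big)$ for a tense bundle-like metric. Taking $i=q$, the right-hand side equals the dimension of $\{f\in\Omega_{b}^{0}(M):d_{b}f=f\,\kappa_{b}\}$. If $\xi(\mathcal{F})=0$, say $\kappa_{b}=d_{b}h$, then $f=e^{h}$ lies in this space and $H_{b}^{q}(\mathcal{F})\neq 0$; conversely, a nonzero solution of $d_{b}f=f\,\kappa_{b}$ is nowhere zero (along any curve this is a linear first-order ODE, so a single zero forces $f\equiv 0$), so $\kappa_{b}=d_{b}\log|f|$ is exact and $\xi(\mathcal{F})=0$. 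Hence $H_{b}^{q}(\mathcal{F})\neq 0\iff\xi(\mathcal{F})=0$, which by the above is equivalent to tautness.

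The main obstacle is that the two structural inputs being borrowed wholesale --- the existence of a tense bundle-like metric and the twisted Poincar\'e duality $\dim H_{b}^{i}=\dim H_{b}^{q-i}$ for the $\kappa_{b}$-twisted basic complex --- are themselves substantial, resting on the Hodge theory of the basic complex (finite dimensionality of $H_{b}^{*}$, the $\kappa_{b}$-modified transversely elliptic Laplacian) and, for the averaging step, on Molino's structure theory of Riemannian foliations. Granting these, the rest is routine; the one place genuinely deserving care is checking that the leafwise conformal deformation by a basic function preserves the bundle-like condition and transforms $\kappa$ exactly by $-p\,d_{b}u$.
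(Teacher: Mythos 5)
A preliminary remark: the paper does not prove this statement at all --- it is quoted from \'Alvarez L\'opez \cite{Al} and Masa \cite{Mas} as background --- so your sketch can only be judged on its own terms. Two of its three blocks are sound modulo the structural inputs you declare: the implication $\xi(\mathcal{F})=0\Rightarrow$ taut via a leafwise conformal rescaling $e^{2f/p}$ of a tense bundle-like metric is the standard argument and the computation $\kappa\mapsto\kappa-p\,d_bu$ is correct (though the tenseness theorem you use is due to Dom\'{\i}nguez \cite{Do} and Mason \cite{Mason}, not to \cite{Al}); and the identification of $H_b^q(\mathcal{F})$ with $\{f\in\Omega_b^0(M):d_bf=f\kappa_b\}$ via twisted Poincar\'e duality, together with the nowhere-vanishing ODE argument, correctly yields $H_b^q(\mathcal{F})\neq0\iff\xi(\mathcal{F})=0$.

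The genuine gap is in the direction ``taut $\Rightarrow\xi(\mathcal{F})=0$''. The Rummler--Sullivan criterion does not give ``a closed $q$-form positive on $Q$'': it gives a $p$-form positive on $T\mathcal{F}$ that is only \emph{relatively} closed. Worse, the criterion you then want to feed the averaged form back into is false: a closed basic $q$-form positive on $Q$ exists for \emph{every} transversally oriented Riemannian foliation with bundle-like metric, namely the transverse volume form $\nu_Q$, which is basic and closed simply because basic $(q+1)$-forms vanish; Carri\`ere's flow on $\mathrm{T}^3_{\mathrm{A}}$ (Example~1 of this paper) carries such a form and is non-taut. So the averaging step, even if carried out with Molino theory, establishes nothing, and any argument extracting a taut bundle-like metric from such a $q$-form would prove all transversally oriented Riemannian foliations taut. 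The standard repair works on the leafwise side, in the spirit of the ``Sullivan purification and Rummler formula'' the paper alludes to in Subsection~4.2: by Rummler, the characteristic $p$-form $\chi$ of a metric with minimal leaves is relatively closed; keep $T\mathcal{F}$ with the leafwise volume $\chi_{\mid T\mathcal{F}}$, take as complement the subbundle $\{X:\iota_X\chi=0 \mbox{ on } \Lambda^{p-1}T\mathcal{F}\}$, and install on it the holonomy-invariant transverse metric of the Riemannian foliation. The resulting metric is bundle-like, still has $\chi$ as characteristic form, hence still has minimal leaves, so $\kappa=0$, $\kappa_b=0$, and $\xi(\mathcal{F})=[\kappa_b]=0$ by the metric-independence of the \'Alvarez class. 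Without this (or an equivalent) step your proof of the first equivalence, and therefore also of the second, is incomplete.
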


\section{A new characterization of taut Riemannian foliations}
\label{sec:2}
In the following, we show that it is possible to obtain a characterization of the non-taut
Riemannian foliation of arbitrary codimension using the transverse divergence.

\begin{theorem}
\label{characterization}Let $(M,g,\mathcal{F})$ be a Riemannian foliation defined on a closed
manifold $M$. Then the foliation is non-taut if and only if there is a basic
vector field $v$ on $M$ such that $\mathrm{div}^Q {v}\ge 0$ and $\mathrm{div}^Q {v}>0$
at some point, where $\mathrm{div}^Q$ is the transverse divergence
operator associated to the metric $g$.
\end{theorem}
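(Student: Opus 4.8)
The plan is to connect the transverse divergence of a basic vector field to the \'Alvarez class via the relation \eqref{diverg tondeur} and the decomposition $\kappa = \kappa_b + \kappa_o$, then invoke Theorem \ref{theorem alvarez}. First I would observe that for a \emph{basic} vector field $v$, the function $\mathrm{div}^Q v$ is itself basic (this follows because the flow of a basic vector field preserves the foliation, or by a direct computation using \eqref{basic vector field} showing $\mathrm{div}^Q v$ is leafwise constant). Next, integrate against the transverse volume: the natural pairing to use is with the volume form $\chi_{\mathcal{F}} = \nu_{T\mathcal{F}} \wedge$ (something) weighted so that $\int_M (\mathrm{div}^Q v)\, \mu = \int_M g(v, \tau)\, \mu$ for a suitable measure, where $\tau = \kappa_b^{\sharp}$; more precisely, Rummler's formula and Tondeur's results give that $\int_M (\mathrm{div}\, v)\, \mathrm{vol}_g = -\int_M g(v,\kappa^{\sharp})\, \mathrm{vol}_g$ for any $v$, hence by \eqref{diverg tondeur}, $\int_M (\mathrm{div}^Q v)\, \mathrm{vol}_g = 0$ for \emph{every} vector field $v$ — so the unweighted integral is useless and the correct statement must use a twisted measure adapted to $\kappa_o$.

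The key technical step, then, is the following. By \'Alvarez's work there is a metric $g'$ in the same transverse-conformal-type class (the "adapted" metric) for which $\kappa_o = 0$ and the basic component $\kappa_b$ is unchanged up to the cohomology class; moreover one can rescale the leafwise metric by $e^{f}$, $f$ basic, to change $\kappa_b$ within its cohomology class by $d_b f$. So: if $\xi(\mathcal{F}) = 0$, choose the leafwise rescaling making $\kappa_b = 0$ and $\kappa_o = 0$, i.e. the leaves minimal; then $\mathrm{div}^Q v = \mathrm{div}\, v$ and the Green theorem forces $\int_M \mathrm{div}^Q v\, \mathrm{vol}_g = 0$, so no $v$ (basic or not) can have $\mathrm{div}^Q v \ge 0$ with strict inequality somewhere. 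This proves the "only if" direction of the contrapositive, i.e. taut $\Rightarrow$ no such $v$. Conversely, suppose the foliation is non-taut, so $\xi(\mathcal{F}) \ne 0$; I want to produce a basic $v$ with $\mathrm{div}^Q v \ge 0$, $> 0$ somewhere. The idea is that non-vanishing of $\xi(\mathcal{F})$ means $\kappa_b$ is not $d_b$-exact, and by a Hahn–Banach / Hodge-theoretic argument on the basic complex (using that $\delta_b \kappa_b$ relates to $\mathrm{div}^Q$ of the dual field) one can find a basic function $h$ and set $v = \tau + \nabla_b h$ or similar so that $\mathrm{div}^Q v = \delta_b$-type expression that is forced to be $\ge 0$, $\not\equiv 0$ — here one uses that the basic Laplacian $\Delta_b$ has the property that a nonzero cohomology obstruction yields a sign-definite solvability defect.

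The main obstacle I anticipate is the converse direction: turning "$\xi(\mathcal{F}) \ne 0$" into an \emph{explicit} basic vector field with nonnegative transverse divergence. The cleanest route is probably to use Theorem \ref{theorem alvarez} together with the characterization of $\xi(\mathcal{F})$ via Masa's/\'Alvarez's duality: $\xi(\mathcal{F}) = 0 \iff$ the transversal volume class is nonzero $\iff$ there is a nowhere-zero transverse-invariant transverse volume, equivalently a basic $1$-form $\eta$ with $d_b \eta = \kappa_b$ would fail; then one sets up the equation $\mathrm{div}^Q v = \phi$ for prescribed basic $\phi \ge 0$ and shows solvability in $v$ basic is equivalent to $\int_M \phi\, e^{-F}\,\chi_{\mathcal{F}} = 0$ for the "tautness defect" function $F$ (a basic primitive-type quantity), which can only hold for all such $\phi$ when $F$ is constant, i.e. when taut. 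So the heart of the matter is identifying the correct twisted integration pairing — the weighted transverse volume $e^{-F}\chi_{\mathcal{F}}$ whose associated Green identity detects exactly the \'Alvarez class — and verifying that for a \emph{basic} $v$ the divergence theorem with this weight is an equivalence rather than merely a necessary condition.
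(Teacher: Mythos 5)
Your first direction (taut $\Rightarrow$ no such $v$) is close in spirit to the paper's argument, but it has two problems. First, the preliminary ``observation'' that $\int_M (\mathrm{div}^Q v)\,\mathrm{vol}_g=0$ for \emph{every} vector field is false: on a closed manifold the classical Green theorem gives $\int_M \mathrm{div}\,v\,\mathrm{vol}_g=0$, so by \eqref{diverg tondeur} one gets the transverse Green formula $\int_M \mathrm{div}^Q v\,\mathrm{vol}_g=\int_M g(v,\kappa^{\sharp})\,\mathrm{vol}_g$, which is generally nonzero (in the paper's Example 1, $\mathrm{div}^Q\tau$ is a positive constant). This unweighted identity is in fact the correct tool, not a ``useless'' one, and no twisted measure $e^{-F}\chi_{\mathcal F}$ is needed. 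Second, and more seriously, the theorem asserts the nonexistence of such $v$ for the transverse divergence of the \emph{given} metric $g$; your argument passes to a taut metric $g'$ and runs Green's theorem there, which only rules out such $v$ for $g'$. The missing step is the invariance statement: the tautness-realizing metric can be chosen with the same transverse part as $g$ (\'Alvarez, Mason), and for a basic $v$ the quantity $\mathrm{div}^Q v$ depends only on the transverse part of the metric (both divergences project to $-\delta_T$ on a local transversal), so $\mathrm{div}^Q_g v=\mathrm{div}^{Q'}_{g'}v'$ with $v'=(v^{\flat})^{\sharp'}$. Without this bridge the first direction is incomplete.

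The converse direction (non-taut $\Rightarrow$ existence of $v$) is not proved in your proposal: the Hahn--Banach/Hodge sketch, the undefined ``sign-definite solvability defect'' of $\Delta_b$, and the ``tautness defect function $F$'' are placeholders, and you yourself flag this as the main obstacle. The concrete mechanism is much shorter: choose (Mason, Dom\'{\i}nguez) a bundle-like metric $g'$ with the same transverse part for which the mean curvature $\kappa'$ is basic and harmonic; then $0=\delta_b\kappa'=-\mathrm{div}^{Q'}\tau'+\iota_{\tau'}\kappa'$, i.e. $\mathrm{div}^{Q'}\tau'=\|\tau'\|^2\ge 0$, and non-tautness forces $\tau'\not\equiv 0$, hence strict positivity somewhere. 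Transporting $\tau'$ back via $((\tau')^{\flat'})^{\sharp}$ and using the same transverse-part invariance as above yields the desired basic field for the original metric $g$. Your guess ``$v=\tau+\nabla_b h$'' points in the right direction (the candidate is indeed the dual of the basic mean curvature), but without the harmonicity identity $\mathrm{div}^{Q'}\tau'=\|\tau'\|^2$ the positivity is never established, so as written the converse is a gap rather than a proof.
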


\textit{Proof}. We prove first that the above condition regarding the transverse divergence implies that the foliation is non-taut.
As the metric $g$ is bundle-like, it is easy to see that $\mathrm{div}^Q\,{v}=\mathrm{div}^Q\,\pi_Q({v})$, so we can assume that ${v}$ is also perpendicular to the leaves at any point. We consider the \emph{transverse Green formula} (see \textrm{e.g.}
\cite[page 42]{To}):
\[
\int_M\mathrm{div}^Q {v}\,d\mu =\int_Mg( {v},k^{\sharp }) d\mu \;,
\]
the integrals being taken with respect to the measure $d\mu $ induced by the
metric $g$. We point out that the above formula arises from the relation
(\ref{diverg tondeur}) and the theorem of Green on closed, non-necessarily
orientable Riemannian manifolds (see \textrm{e.g.} \cite[page 157]{Po}).

We assume that the foliation is taut and, in accordance with \cite{Al,Mason}, it is possible
to choose a bundle-like metric $g^{\prime }$ such that
the new mean curvature 1-form vanishes, ${{\kappa}} _b^{\prime }={{\kappa}} ^{\prime }=0$, the
transverse part of the metric remaining intact. We consider the relation
\[
\mathrm{div}^Q {v}=\sum_{i=1}^q\iota _{{E_i}}\nabla _{{E_i}}v^{\flat } \;,
\]
the musical isomorphism being considered with respect to the initial metric $g$.
It turns out that $v^{\flat }$ is a basic 1-form on $\mathcal{F}$. Now, taking the basic local
vector fields $\left\{ E_i^{\prime }\right\} _{1\le i\le q}$ as a transverse
orthonormal frame with respect to the new metric $g^{\prime }$ spanning the
new orthogonal complement $Q^{\prime }$ of $T\mathcal{F}$, as the transverse
part of the metric is invariant, we see that
\[
\sum_{i=1}^q\iota _{{E}_i}\nabla _{{E}_i}{v}^{\flat }=
\sum_{i=1}^q\iota_{{E}_i^{\prime }}\nabla _{{E}_i^{\prime }}^{\prime }{v}^{\flat} \;,
\]
and

\[
\mathrm{div}^Q {v}=\mathrm{div}^{Q^{\prime }} {v}^{\prime } \;,
\]
where we denote ${v}^{\prime }:=( {v}^{\flat }) ^{\sharp ^{\prime }}$,
this time the operator $\sharp ^{\prime }$ being constructed with respect
to the bundle-like metric $g^{\prime}$; note that ${v}^{\prime }$ may not
equal ${v}$, as $Q^{\prime }$ may not coincide with $Q$. In fact, regarding
locally the Riemannian foliation as a Riemannian submersion, if we consider
a local transverse manifold $T$, as all geometrical objects are
projectable, then both operators equal $-\delta_T$, where $\delta_T$ is
the classical de Rham coderivative on $T$ (see \textrm{e.g.} \cite{Al}). Integrating
the real function $\mathrm{div}^Q {v}$ over the closed manifold $M$ with
respect to the modified metric, we have

\begin{eqnarray*}
\int_M \mathrm{div}^{Q} {v}\,d\mu ^{\prime } &=&\int_M \mathrm{div}^{Q^{\prime}} {v}^{\prime }d\mu ^{\prime } \\
&=&\int_Mg^{\prime }( {v}^{\prime },{{\kappa}}^{\prime \sharp^{\prime}}) d\mu ^{\prime} \;.
\end{eqnarray*}

As we assumed ${{\kappa}}^{\prime }=0$, we get also ${{\kappa}}^{\prime \sharp^{\prime }}=0$,
and $\int_M\mathrm{div}^{Q} {v}\,d\mu ^{\prime }=0$. Now the
contradiction comes from the positivity of $\mathrm{div}^{Q} v$.

In the sequel we prove the converse statement. Let $(M,g,\mathcal{F})$ be a non-taut Riemannian
foliation. According to \cite{Mason}, we can choose the metric $g$ such that the transverse part
remains intact, and with respect to the new metric $g^{\prime }$, the mean curvature $k^{\prime }$ is basic and harmonic.
Then, in accordance with \cite{Al},
\begin{eqnarray*}
\delta _b{{\kappa}} ^{\prime } &=&-\sum_{i=1}^q\iota _{{E}^{\prime}_i}\nabla^{\prime} _{{E}^{\prime}_i}{{\kappa}}^{\prime }
+\iota _{{\tau} ^{\prime }}{{\kappa}} ^{\prime } \\
&=&0 \;,
\end{eqnarray*}
where $\iota $ stands for interior multiplication and ${\tau} ^{\prime }$ is
defined like ${\tau} $ for $(M,g^{\prime },\mathcal{F})$.

On the other side,
\begin{equation}
\sum_{i=1}^q\iota _{{E}^{\prime}_i}\nabla^{\prime} _{{E}^{\prime}_i}{{\kappa}}^{\prime }   =\mathrm{div}^Q {\tau}^ {\prime}  \,\,\,\mbox{and}\,\,\,
\iota _{{\tau} ^{\prime }}{{\kappa}} ^{\prime } =\left\| {\tau} ^{\prime }\right\|^2 \;, \nonumber
\end{equation}
and thus we get
\[
\mathrm{div}^{Q^{\prime }}{\tau} ^{\prime }=\left\| {\tau} ^{\prime }\right\|^2 \;.
\]
As the foliation is non-taut, then ${\tau} ^{\prime }\not \equiv 0$, so
$\mathrm{div}^{Q^{\prime }} {\tau} ^{\prime }\ge 0$ and $\mathrm{div}^{Q^{\prime}} {\tau} ^{\prime }>0$ at some point. Now, we take
\begin{eqnarray*}
{\tau} _{\xi (\mathcal{F}),g} &:=&( ( {\tau} ^{\prime }) ^{\flat^{\prime }}) ^{\sharp } \\
&=&( {{\kappa}} ^{\prime }) ^{\sharp } \;.
\end{eqnarray*}
We remark that the transverse vector field ${\tau} _{\xi (\mathcal{F}),g}$
is determined only by the basic cohomology class $\xi (\mathcal{F})$ of the
foliation and the initial bundle-like metric $g$. As before, we obtain
\[
\mathrm{div}^Q {\tau} _{\xi (\mathcal{F}),g}=\mathrm{div}^{Q^{\prime }} {\tau}^{\prime } \;.
\]

We notice that ${\tau} ^{\prime }$ and ${\tau} _{\xi (\mathcal{F}),g}$ are basic
vector fields, while $\mathrm{div}^Q {\tau} _{\xi (\mathcal{F}),g}$ is a basic
real function, which finishes the proof of the theorem.
\qed

\begin{corollary}
\label{taut characterization}A Riemannian foliation on a closed manifold is taut if and only if
for any basic vector field ${v}$ we have $\mathrm{div}^Q {v}=0$, or there are at
least two points $p_1$ and $p_2$ such that $\mathrm{div}_{p_1}^Q {v}> 0$ and
$\mathrm{div}_{p_2}^Q {v}< 0$.
\end{corollary}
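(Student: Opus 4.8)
\emph{Proof proposal.} The plan is to obtain this corollary as a purely logical reformulation of Theorem \ref{characterization}, using two elementary observations: that the space of basic vector fields is closed under $v\mapsto -v$, and that $\mathrm{div}^Q$ is $\mathbb{R}$-linear, so $\mathrm{div}^Q(-v)=-\mathrm{div}^Q v$. First I would restate Theorem \ref{characterization} in contrapositive form: the foliation is taut if and only if, for \emph{every} basic vector field $v$, it is \emph{not} the case that $\mathrm{div}^Q v\ge 0$ on all of $M$ and $\mathrm{div}^Q v>0$ at some point. The task is then to check that, for a fixed basic $v$, the negation of the dichotomy in the statement is equivalent to the negated hypothesis of Theorem \ref{characterization} holding for $v$ or for $-v$.

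For the forward implication, assume the foliation is taut and fix a basic vector field $v$; recall that $\mathrm{div}^Q v$ is a basic real function on $M$. Applying the contrapositive form of Theorem \ref{characterization} to $v$ excludes that $\mathrm{div}^Q v\ge 0$ everywhere while being strictly positive somewhere, and applying it to the basic vector field $-v$ (for which $\mathrm{div}^Q(-v)=-\mathrm{div}^Q v$) excludes that $\mathrm{div}^Q v\le 0$ everywhere while being strictly negative somewhere. Consequently, if $\mathrm{div}^Q v$ is not identically zero, it cannot keep a constant sign: if it is strictly positive at some point it fails to be $\ge 0$ everywhere, hence is strictly negative at some point, and symmetrically. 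This is precisely the alternative in the corollary, with $p_1$ and $p_2$ the points exhibiting the two signs.

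For the converse, suppose every basic vector field satisfies the stated alternative and assume, for contradiction, that the foliation is non-taut. By Theorem \ref{characterization} there is a basic vector field $v$ with $\mathrm{div}^Q v\ge 0$ everywhere and $\mathrm{div}^Q v>0$ at some point. Such a $v$ has $\mathrm{div}^Q v\not\equiv 0$ and never attains a strictly negative value, so it violates the assumed alternative; this contradiction forces tautness. I do not expect a genuine obstacle here: the corollary is the Boolean negation of Theorem \ref{characterization}, applied simultaneously to $v$ and $-v$, and the only point deserving a word of care is the (immediate) remark that $-v$ is again a basic vector field with $\mathrm{div}^Q(-v)=-\mathrm{div}^Q v$, together with the trivial logical fact that ``not $\ge 0$ everywhere'' means ``$<0$ somewhere''. \qed
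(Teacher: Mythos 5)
Your proposal is correct and follows exactly the route the paper intends: the corollary is stated there without proof as an immediate logical reformulation of Theorem \ref{characterization}, and your argument spells this out faithfully. The one genuinely needed observation — that one must also apply the theorem to $-v$ (using that $-v$ is basic and $\mathrm{div}^Q(-v)=-\mathrm{div}^Q v$) to rule out the case where $\mathrm{div}^Q v$ is nonpositive everywhere and negative somewhere — is precisely the point you identify and handle correctly.
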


\begin{remark}
\label{Rem transverse geometry}If we assume the transverse orientation, then another result that holds only on taut Riemannian foliation is represented by the Poincar\'e duality (see \textrm{e.g.} \cite{Al,Mas}). This, together with the
above result lead us to regard taut foliations as the maximal extension of closed Riemannian manifolds when some classical features are still valid.
\end{remark}

\section{Several applications}
\label{sec:3}
\subsection{A new tautness criterion}

In the following we test the above result on some standard examples of
Riemannian foliations, showing that it turns out to be a convenient tautness criterion.

\textbf{Example 1.} We consider a Riemannian flow which is classical in the theory of foliations.
We take a matrix $\mathrm{A}\in \mathrm{SL}(2,\mathbb{Z})$, with trace condition $\mathrm{Tr} \mathrm{A}>2$.
If $\lambda _1$, $\lambda _2$ are the eigenvalues, it is easy to see that
\[
0<\lambda _1\cdot \lambda _2=1,\,\lambda _i\neq 1,\,\lambda _i>0,
\]
for $1\le i\le 2$. Let ${V}_1$, ${V}_2$ be the corresponding orthonormal
eigenvectors. Then, one can define a semi-direct product
$H=\mathbb{R}\times_\varphi \mathbb{R}^2$, where $\left\{ \varphi _t\right\} _{t\in \mathbb{R}}$
is defined as
\[
\varphi _t(x)=\mathrm{A}^tx,\ \forall x\in \mathbb{R}^2.
\]

We get a Lie group $H$, such that if $p_1$, $p_2\in H$, $p_1=(t_1,x_1)$, and
$p_2=(t_2,x_2)$, then the multiplication will be defined in the following way:
\[
p_1\cdot p_2=(t_1+t_2,\mathrm{A}^{t_1}x_2+x_1) \;.
\]

Now, if $x_1=\alpha _1 {V}_1+\beta _1 {V}_2$, $x_2=\alpha _2 {V}_1+\beta _2 {V}_2$, as $\mathbb{R}\times\mathbb{R} {V}_1\times\mathbb{R} {V}_2\equiv\mathbb{R}^3$,
we can consider\ $p_1=(t_1,\alpha_1,\beta _1)$, $p_2=(t_2,\alpha _2,\beta _2)$, and the above multiplication can be written
as:
\[
p_1\cdot p_2=(t_1+t_2,\lambda _1^{t_1}\alpha _2+\alpha _1,\lambda
_2^{t_1}\beta _2+\beta _1) \;.
\]

As $\mathbb{R}\times\mathbb{R} {V}_1\times\mathbb{R} {V}_2\equiv\mathbb{R}^2\times \mathbb{R}$,
the first term becomes isomorphic with the
\emph{orientation preserving affine group } ${GA}$, defined as

\[
{GA}:=\left\{ \left(
\begin{array}{cc}
a & b \\
0 & 1
\end{array}
\right) \right\}_{a\in \mathbb{R}_{+}, b\in \mathbb{R}} \;,
\]
the isomorphism being expressed by the correspondence \cite{Car}:
\[
\left(
\begin{array}{cc}
a & b \\
0 & 1
\end{array}
\right) \longleftrightarrow (\log _{\lambda _1}a,b) \;.
\]
Is it now easy to see that $\Gamma :=\mathbb{Z\times }_\varphi \mathbb{Z}^2$
represents in fact a discrete and uniform subgroup.

We introduce a left invariant metric on $H$. With respect to the
above identification of $\mathbb{R}^3$, at $e=(0,0,0)$ we take the vectors
$\left( {E}_1\right) _e=(1,0,0)$, $\left( {E}_2\right) _e=(0,1,0)$ and $\left( {E}_3\right) _e=(0,0,1)$.

For any $p=(t_p,\alpha _p,\beta _p)\in H$, let $l_p$ be the left
multiplication by $p$, and $L_p:=dl_p$ the differential application. If we
consider the curve \ $c:\mathbb{R}\rightarrow H$ given by $c(s):=s\left(
{E}_1\right) _e$, then $\left( {E}_1\right) _e=\frac{dc}{ds}\mid _{s=0}$ and, as
a consequence, we define:
\begin{eqnarray*}
\left( {E}_1\right)_p &=&L_p(\left( {E}_1\right) _e) \\
\ &=&\frac{dc}{ds}\mid_{s=0}\left(
(t_p+s,\lambda_1^{t_p}0+\alpha_p,\lambda_2^{t_p}0+\beta_p)\right) \\
\ &=&(1,0,0)_p \;.
\end{eqnarray*}

Analogously, we consider:
\begin{eqnarray*}
\left( {E}_2\right) _p &=&\lambda _1^{t_p}(0,1,0)_p \;, \\
\left( {E}_3\right) _p &=&\lambda _2^{t_p}(0,0,1)_p \;.
\end{eqnarray*}

Now the metric tensor $g_H$ will be chosen such that $\left\{ \left(
{E}_1\right) _p,\,\left( {E}_2\right) _p,\,\left( {E}_3\right) _p\right\} $ will
be an orthonormal basis at each point $p=(t_p,\alpha _p,\beta _p)\in H$. The
resulting metric tensor will be a \emph{warped} metric tensor, and being a
left invariant metric, $g_H$ will be also $\Gamma $-invariant. Consequently,
we can consider the metric $g$ induced on the Lie group $\mathrm{T}^3_{\mathrm{A}}=\Gamma\backslash H$, which becomes a $GA-$foliated manifold when considering the
flow induced by ${E}_3$ \cite{Car}.

We compute the Lie brackets
\begin{eqnarray*}
\lbrack {E}_1, {E}_2 \rbrack &=&\ln \lambda _1 {E}_2\;, \\
\lbrack {E}_1, {E}_3 \rbrack &=&\ln \lambda _2 {E}_3\;, \\
\lbrack {E}_2, {E}_3 \rbrack &=&0\;.
\end{eqnarray*}

For the Cartan and Christoffel coefficients
\begin{eqnarray*}
C_{ij}^k {E}_k &=&[{E}_i,{E}_j]\:, \\
\Gamma _{ij}^k &=&g( \nabla _{{E}_i}{E}_j,{E}_k )\: ,
\end{eqnarray*}
with $1\le i,j,k\le 3$, using the Koszul formula we get
\[
\Gamma _{ij}^k=\frac 12\left( C_{ij}^k+C_{ki}^j+C_{kj}^i\right) \;.
\]
We can compute now the necessary coefficients
\begin{eqnarray*}
&\Gamma _{11}^1 =\Gamma _{33}^2=\Gamma _{12}^1=0, \\
&\Gamma _{33}^1 =\ln \lambda _{2,}\,\,\Gamma _{21}^2=-\ln \lambda _1 \;.
\end{eqnarray*}
As a consequence we obtain the mean curvature vector field
\begin{eqnarray*}
\tau &=&\Gamma _{33}^1 {E}_1+\Gamma _{33}^2{E}_2 \\
&=&\ln \lambda _2{E}_{1} \;.
\end{eqnarray*}
Now, we take ${v}:={\tau} $, and obtain
\begin{eqnarray*}
\mathrm{div}^Q {\tau} &=&\ln \lambda _2\Gamma _{11}^1+\ln \lambda _2\Gamma_{21}^2 \\
&=&-\ln \lambda _2 \cdot \ln \lambda _1 \\
&=&\left( \ln \lambda _1\right) ^2>0 \;.
\end{eqnarray*}
From Theorem \ref{characterization} we obtain that the flow $T_{\mathrm{A}}^3$ is
non-taut.

\begin{remark}
The same conclusion can be reached using a direct study of the basic the Rham cohomology
 \cite{Car}. On the other side, it is also easy to see that for the
above $GA-$foliated manifold the transverse curvature
operator is non-positive, so the basic cohomology cannot be investigated
using vanishing results or other classical analytical method (see \textrm{e.g.} \cite{Mi-Ru-To,Sl}).
\end{remark}

\begin{remark}
A natural question is if on Riemannian non-taut foliations all globally
defined basic vector fields share the above property regarding the transverse divergence. We notice that if ${v}$ is basic and $v\perp {{\kappa}} ^{\sharp }$ at any point, then from (\ref{diverg tondeur})
we obtain that $\mathrm{div}^Q {v}=\mathrm{div}{v}$, and we get either $\mathrm{div}^Q {v} \equiv 0$, or
this function has positive and negative values on the manifold $M$. As a direct consequence
${\tau} _{\xi (\mathcal{F}),g}$ is not perpendicular on ${{\kappa}}^{\sharp }$ at any point. For instance, in the
above example, considering the basic vector field $E_2$, we can compute $\mathrm{div}^Q {E}_2=\Gamma _{12}^1=0$.
\end{remark}

\textbf{Example 2.} We can obtain another example increasing the transversal
dimension of the foliations; namely, we consider a matrix ${A}\in \mathrm{SL}(3,\mathbb{Z})$, for instance \cite{Do}

\[
{A}=\left(
\begin{array}{rrr}
2 & 0 & -1 \\
0 & 3 & -1 \\
-1 & -1 & 1
\end{array}
\right) .
\]
If $\left\{\lambda_i\right\}_{1\le i\le 3}$, are the eigenvalues and
$\left\{{V}_i\right\}_{_{1\le i\le 3}}$ the corresponding eigenvectors, then
computing the sign of the characteristic polynomial of $\mathrm{A}$ we obtain that
\[
0<\lambda_1<1,\,\,2<\lambda_2<3,\,\,3<\lambda_2<4 \;,
\]
and, as in the previous example

\[
\lambda _1\cdot \lambda _2\cdot \lambda _3=1,\,\,\lambda _i\neq
1,\,\,\lambda _i>0 \;,
\]
for $1\le i\le 3$. Introducing the bundle-like metric in a similar manner as
above, for ${v}:={\tau} $ we finally end up with
\begin{eqnarray*}
\mathrm{div}^{Q} {\tau} &=&-\ln \lambda _2\left( \ln \lambda _1+\ln \lambda_3\right) \\
\ &=&\left( \ln \lambda _2\right) ^2>0 \;,
\end{eqnarray*}
which shows that the Riemannian foliation is non-taut.

\begin{remark}
For a proof using cohomological arguments see \cite{Do}.
\end{remark}

Finally, we give an application of Corollary \ref{taut characterization}.

\textbf{Example 3}. We may consider the torus $T^2:=\mathbb{R}^2/\mathbb{Z}^2 $
with the metric $g=$ $e^{2f(y)}dx^2+dy^2$, for some periodic function $f$;
as consequence, $\left\{ \partial _y,e^{-f(y)}\partial _x\right\} $
represent an orthonormal basis at any point, $Q=\mathrm{span}\{\partial _y\}$,
$T\mathcal{F}=\mathrm{span}\{e^{-f(y)}\partial _x\}$. Then any $C^\infty $
transversal vector field ${v}$ will be written as
\[
{v}=\varphi (y)\partial _y \;,
\]
$\varphi (y)$ being again a periodic $C^\infty $ function. Then, the
transverse divergence is
\begin{eqnarray*}
\mathrm{div}^Q {v} &=& g\left( \nabla _{\partial _y}\varphi (y)\partial_y,\partial _y \right) \\
&=&\frac{\partial \varphi (y)}{\partial y} \;.
\end{eqnarray*}
Now, we have
\[
\varphi (0)=\varphi (1).
\]
If $\varphi (y)$ is constant with respect to $y$, then $\mathrm{div}^Q {v}=0$;
otherwise we will have $\frac{\partial \varphi (y)}{\partial y}>0$ at some
points and $\frac{\partial \varphi (y)}{\partial y}<0$ at other points. The tautness of the foliation
is implied by Corollary \ref{taut characterization}.

\subsection{Other applications: new proofs for old results}
As pointed out in the introductory section, from the characterization
theorem we can obtain a simpler proof of some well-known results.

In the following, we refer first of all to a classical tautness result of
Haefliger concerning foliations with arbitrary metric. The author gives a tautness
characterization using the holonomy pseudogroup, and this characterization
is trivially satisfied when there is a representative of the holonomy
pseudogroup defined on a closed transverse manifold \cite[Corollary 2]{Hae}. In the particular case of foliations with
bundle-like metric we can project the transverse divergence operator on the closed transversal. Consequently,
we give a short proof of this result applying the Green theorem.

\begin{proposition}[Haefliger \cite{Hae}]
If a Riemannian foliation has a closed transversal which cuts all leaves then it is taut.
\end{proposition}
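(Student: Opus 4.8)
The plan is to contradict Theorem~\ref{characterization}: assume the foliation is non-taut and exhibit a basic vector field $v$ on $M$ with $\mathrm{div}^Q v \ge 0$ everywhere and $\mathrm{div}^Q v > 0$ somewhere, which is impossible. The natural candidate is the vector field $\tau_{\xi(\mathcal{F}),g}$ produced in the proof of Theorem~\ref{characterization}; by construction it is basic and its transverse divergence is a nonnegative basic function, positive at some point whenever $\tau' \not\equiv 0$, i.e.\ whenever the foliation is non-taut. So the whole task reduces to showing that the existence of a closed transversal $T$ cutting every leaf forces this basic function to vanish identically, which then forces $\tau' \equiv 0$ and hence tautness.

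First I would recall, as noted in the proof of Theorem~\ref{characterization}, that locally a Riemannian foliation is a Riemannian submersion onto a transverse manifold, and that $\mathrm{div}^Q v$ on $M$ coincides with $-\delta_T$ applied to the projected $1$-form on a local transversal $T$, because all the objects involved ($v^\flat$, the transverse metric, the Levi-Civita connection in the transverse directions) are projectable/holonomy-invariant. Next I would globalize this: since $T$ is a \emph{closed} transversal meeting all leaves, the function $\mathrm{div}^Q v$, being basic (constant along leaves, and in fact locally the pullback of a function on the transverse manifold), restricts to a globally well-defined smooth function $f$ on $T$; moreover under the local submersion picture $f$ is, on each chart, exactly $-\delta_{g_T}\alpha$ for the corresponding projected $1$-form $\alpha$, with $g_T$ the (globally defined, holonomy-invariant) transverse metric on $T$. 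The key point is that $v^\flat$ being basic means $v$ genuinely descends to a global vector field $\bar v$ on $T$, so $f = -\mathrm{div}_{g_T}\bar v = -\delta_{g_T}(\bar v^{\flat_T})$ globally on $T$.

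Then I would apply the classical Green theorem on the closed (possibly non-orientable) Riemannian manifold $(T,g_T)$: $\int_T \mathrm{div}_{g_T}\bar v \, d\mu_{g_T} = 0$, exactly as cited from \cite[page 157]{Po}. Since $\mathrm{div}^Q v = f = -\mathrm{div}_{g_T}\bar v$ is nonnegative on all of $M$ and hence (via the correspondence on $T$) nonpositive on $T$ with integral zero, it vanishes identically on $T$, and by transversality (every leaf meets $T$) it vanishes identically on $M$. Going back to Theorem~\ref{characterization}, this is incompatible with non-tautness unless $\tau_{\xi(\mathcal{F}),g} \equiv 0$; but then $\mathrm{div}^{Q'}\tau' = \|\tau'\|^2 \equiv 0$ gives $\tau' \equiv 0$, i.e.\ the \'Alvarez class vanishes and the foliation is taut by Theorem~\ref{theorem alvarez}.

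The main obstacle I expect is making the descent to $T$ fully rigorous: one must be careful that $v$ being a basic vector field guarantees $\bar v$ is a single-valued smooth vector field on the whole closed transversal $T$ (not just locally), and that the identification $\mathrm{div}^Q v \leftrightarrow -\mathrm{div}_{g_T}\bar v$ is compatible with the holonomy pseudogroup so that the integral over $T$ is meaningful. This is where one uses that $(M,\mathcal{F},g)$ is Riemannian and that $T$ being a closed transversal cutting all leaves means the holonomy pseudogroup of $\mathcal{F}$ is realized by isometries of $(T,g_T)$; once this is in place, the Green theorem on $T$ does the rest, and — as advertised in the introduction — the tautness of a foliation with a closed transversal is revealed to be a direct avatar of the Green theorem.
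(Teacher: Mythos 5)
Your proposal is correct and is essentially the paper's own argument: assuming non-tautness, you take the basic field supplied by Theorem \ref{characterization}, identify its transverse divergence along the closed transversal $T$ with an honest divergence on the closed manifold $T$ (using that $\mathrm{div}^Q v$ is a basic function, that everything is projectable, and that $T$ meets every leaf), and contradict the classical Green theorem on $T$. The only deviations are technical rather than conceptual: the paper arranges the identification by extending $Q$ from $T$ and modifying the bundle-like metric so that $T$ becomes orthogonal to the leaves, while you instead push the holonomy-invariant transverse metric onto $T$ via the isomorphism $T_xT\simeq Q_x$ (equally valid, though this is exactly the step you leave partly sketched); your detour through $\tau_{\xi(\mathcal{F}),g}$ and $\mathrm{div}^{Q'}\tau'=\left\|\tau'\right\|^2$ is unnecessary, since any basic field furnished by Theorem \ref{characterization} suffices; and the sign in your identification $f=-\mathrm{div}_{g_T}\bar v=-\delta_{g_T}(\bar v^{\flat})$ should read $f=+\mathrm{div}_{g_T}\bar v=-\delta_{g_T}(\bar v^{\flat})$, a harmless slip because Green's theorem annihilates the integral either way.
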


\textit{Proof}. We consider a closed transverse manifold $T $ and suppose by
contradiction that the foliation is non-taut. If at some point $p\in M$ the manifold
$T $ meets a leaf, then at that point it generates a subspace $Q_p$ of
the tangent space $T_pM$ which is a complement of $T_p\mathcal{F}$. It is
easy to see that $Q_p$ can be determined by a unique retraction
$r_p:T_pM\rightarrow T_p\mathcal{F}$, with $\ker r_p=Q_p$. Considering now
the vector bundle constructed over $M$ using these mathematical objects, we
can extend the complementary distribution $Q$ from the closed subset $T
$ to the whole manifold $M$ as a classical extension of a section in a fibre
bundle (see \textrm{e.g.} \cite[Theorem 5.7]{Ko-No}).

In a similar way as in \cite{Al, Do}, requesting that $Q_p\perp T_p\mathcal{F}$
at each $p$, we can obtain a new metric which keeps the transverse and the
leafwise part of the initial metric, but this time $T $ is a
transverse orthogonal manifold. The foliation is still non-taut. According
to the characterization, there will be some basic vector field ${v}$, which can be also
made perpendicular to the leaves, such that $\mathrm{div}^{Q} {v}\ge 0$ and $\mathrm{div}^{Q} {v}>0$ at
some point. At any point where $T $ meets a leaf $\mathrm{div}^{Q}v$
in fact equals the classical divergence operator $\mathrm{div}^T {v}$ on $T $;
as $T $ meets all the leaves and the
function $\mathrm{div}^{Q} {v}$ is constant along leaves, we will
have $\mathrm{div}^T {v}\ge 0$ and $\mathrm{div}^T {v}>0$ at some
point on $T $. This fact is impossible considering the classical
Green theorem, this time on the closed manifold $T $.
\qed

As pointed out in \cite[Lemma 6.3]{Al}, the tautness of a Riemannian foliations is a property
which can be lifted on finite Riemannian coverings. As we cannot take the sum of metrics
in some averaging process to obtain a minimizing metric, the proof was obtained using Sullivan
purification \cite{Sull} and Rummler formula \cite{Rum}. In turn, with respect to a chosen metric,
a finite sum of basic vector fields will always preserve the positivity of transverse
divergence. We present a  swift proof of the result below using our tautness characterization.

\begin{proposition}[\'Alvarez L\'opez \cite{Al}]
Let $\pi :\tilde M\rightarrow M$ be a Riemannian finite covering of $M$. Then a Riemannian foliation
$\mathcal{F}$ on $M$ is taut if and only if the Riemannian foliation $\tilde {\mathcal{F}}:=\pi ^{*}\left( \mathcal{F}\right)$
on $\tilde M$ is taut.
\end{proposition}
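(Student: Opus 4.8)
\textit{Proof proposal}. The plan is to use the transverse divergence characterization of tautness (Theorem \ref{characterization}, equivalently Corollary \ref{taut characterization}) to reduce both implications to the observation that basic vector fields and their transverse divergences pull back nicely under a finite Riemannian covering. The key point is that the covering map $\pi$ is a local isometry and sends leaves to leaves, so it induces a bijection between local transverse frames and preserves the Levi--Civita connection; hence it intertwines the transverse divergence operators, and it maps basic vector fields on $M$ to basic vector fields on $\tilde M$.

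First I would treat the direction ``$\mathcal{F}$ taut $\Rightarrow \tilde{\mathcal{F}}$ taut'': choosing on $M$ a bundle-like metric with vanishing mean curvature form, its pullback $\pi^{*}g$ is bundle-like on $\tilde M$ with $\tilde{\kappa} = \pi^{*}\kappa = 0$, so $\tilde{\mathcal{F}}$ is taut by definition (this direction does not even need the new characterization). The substance is in the converse. Suppose $\tilde{\mathcal{F}}$ is taut but $\mathcal{F}$ is not; I would then invoke Theorem \ref{characterization} on $M$ to obtain a basic vector field $v$ on $(M,g,\mathcal{F})$ with $\mathrm{div}^{Q}v \ge 0$ everywhere and $\mathrm{div}^{Q}v > 0$ at some point $p$. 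The lifted field $\tilde v$, defined by $\tilde v_{\tilde x} := (d\pi_{\tilde x})^{-1}(v_{\pi(\tilde x)})$, is well defined since $d\pi$ is a fibrewise isomorphism, it is basic on $\tilde{\mathcal{F}}$ because the defining condition \eqref{basic vector field} is local and $\pi$ is a local diffeomorphism preserving the leafwise distribution, and it is smooth. Because $\pi$ is a local isometry, $\mathrm{div}^{\tilde Q}\tilde v = (\mathrm{div}^{Q}v)\circ \pi$, so $\mathrm{div}^{\tilde Q}\tilde v \ge 0$ on $\tilde M$ and is strictly positive at any point of $\pi^{-1}(p)$. Applying Theorem \ref{characterization} to $\tilde{\mathcal{F}}$ forces $\tilde{\mathcal{F}}$ to be non-taut, a contradiction; this completes the proof.

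The main obstacle to watch is the smoothness and global well-definedness of the lift $\tilde v$, and more precisely the assertion that $\pi$ intertwines the transverse divergences: one must check that the transverse distribution $\tilde Q = \pi^{*}Q$ is exactly $T\tilde{\mathcal{F}}^{\perp}$ for the pulled-back metric and that $\pi$ carries a local $g$-orthonormal transverse frame to a local $\pi^{*}g$-orthonormal transverse frame, so that formula \eqref{divergenta_general} transports verbatim. Since $\pi$ is a Riemannian covering (a local isometry) and leaf-preserving, these are routine but should be stated. Alternatively, one could phrase the whole argument through Corollary \ref{taut characterization}, noting that a sign change of $\mathrm{div}^{\tilde Q}\tilde v$ on $\tilde M$ would descend to a sign change of $\mathrm{div}^{Q}v$ on $M$ by surjectivity of $\pi$; the two formulations are equivalent, and I would use whichever keeps the bookkeeping lightest.
\qed
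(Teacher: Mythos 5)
Your proposal is correct. For the substantive direction ($\tilde{\mathcal{F}}$ taut $\Rightarrow \mathcal{F}$ taut) you argue exactly as the paper does: contrapositively, you lift the basic field $v$ with $\mathrm{div}^{Q}v\ge 0$ and $>0$ somewhere through the local isometry $\pi$, note that the lift is basic and that $\pi$ intertwines the transverse divergences, and apply Theorem \ref{characterization} on $\tilde M$ (you should just add the one-line remark that $\tilde M$ is closed, being a finite covering of a closed manifold, so the theorem indeed applies; the intertwining checks you flag are routine, as you say). The difference is in the other direction. The paper stays entirely inside the divergence framework: given a basic witness $\tilde v$ of non-tautness of $\tilde{\mathcal{F}}$, it forms the deck-group average $\sum_{\gamma\in\Gamma}\gamma^{*}(\tilde v)$, which is again basic and still has nonnegative, somewhere positive transverse divergence (positivity survives finite sums, unlike minimality of metrics under averaging), and then projects it to $M$; this is precisely the device the section is meant to advertise as a substitute for Sullivan purification. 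You instead prove $\mathcal{F}$ taut $\Rightarrow \tilde{\mathcal{F}}$ taut by pulling back a metric with minimal leaves, which is the classical, more elementary argument and does not need the characterization at all — in fact you do not even need the bundle-like metric with $\kappa=0$ from \cite{Al,Mason} that you invoke, since minimality of leaves is a local condition preserved by any Riemannian covering. So your proof is valid and slightly leaner on the easy direction, while the paper's version keeps both implications as applications of the transverse-divergence characterization and showcases the averaging of basic vector fields.
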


\textit{Proof}. If $\mathcal{F}$ is non-taut, then a basic vector field $v$ with
$\mathrm{div}^Q {v}\ge 0$ and $\mathrm{div}^Q {v}>0$ at some point exists on $M$. Using the
local isometry $\pi $ we can consider on $\tilde M$ the pull-back $\pi^{*}\left( {v}\right)$
which will verify the positivity of transverse
divergence, so $\tilde {\mathcal{F}}$ will be non-taut. Conversely, assume
that $\tilde {\mathcal{F}}$ is non-taut and $\tilde{{v}}$ is a basic vector
field on $\tilde M$ such that $\mathrm{div}^Q\tilde{{v}}\ge 0$ and $\mathrm{div}^Q\tilde{{v}}>0$
at some point. Let $\Gamma $ be the group of deck
transformations of $\pi $. Then the finite sum $\sum_{\gamma \in \Gamma}\gamma ^{*}\left( \tilde{{v}}\right) $
represents a basic vector field on $\tilde M$ which also verify the divergence property and it can be projected
on $M$ to obtain a vector ${v}$ with the same features; consequently, $\mathcal{F}$ is non-taut.
\qed
\subsection{A new result: the characterization of taut foliations with dense leaves}

In the final part of the paper we briefly present a new consequence of the main result of Section 3.
We use Corollary \ref{taut characterization} to obtain a characterization of taut Riemannian foliations transversally oriented and
with dense leaves.

\begin{proposition}
If the foliation is transversally oriented and has dense leaves, then it is
taut if any and only if any globally defined basic vector field ${v}$
preserves the transverse volume form $\nu _Q$, \textrm{i.e.}
\begin{equation}
\mathcal{L}_v\nu _Q=0 \;.  \label{solenoidal}
\end{equation}
\end{proposition}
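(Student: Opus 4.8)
The plan is to reduce the statement to the transverse-divergence criteria of Theorem \ref{characterization} and Corollary \ref{taut characterization}, by establishing for every globally defined basic vector field $v$ the identity $\mathcal{L}_v\nu_Q=(\mathrm{div}^Q v)\,\nu_Q$ with $\mathrm{div}^Q v$ a \emph{basic} function, and then invoking the elementary fact that a basic function on a foliation with a dense leaf is constant.

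First I would collect what is needed about $\nu_Q$. The transversal orientation makes $\nu_Q$ a globally defined $q$-form, and since the transverse part of the bundle-like metric is holonomy invariant we have $\mathcal{L}_U\nu_Q=0$ for every leafwise $U$, while $\iota_U\nu_Q=0$ automatically; hence $\nu_Q\in\Omega_b(M)$, and $\nu_Q$ spans $\Omega_b^q(M)$ pointwise. Next, given a basic vector field $v$, I would split $v=\pi_Q(v)+\pi_{T\mathcal F}(v)$: as in the proof of Theorem \ref{characterization} one has $\mathrm{div}^Q v=\mathrm{div}^Q\pi_Q(v)$, the field $\pi_Q(v)$ is again basic (because $T\mathcal F$ is involutive, so $[U,\pi_{T\mathcal F}(v)]$ is leafwise), and $\mathcal{L}_{\pi_{T\mathcal F}(v)}\nu_Q=0$ since $\pi_{T\mathcal F}(v)$ is leafwise and $\nu_Q$ is basic; thus both sides of the desired identity are unchanged on replacing $v$ by $\pi_Q(v)$, and we may assume $v\perp T\mathcal F$. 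For such a basic $v$ the operator $\mathcal{L}_v$ preserves $\Omega_b(M)$ — because $[v,U]$ is leafwise whenever $U$ is — so $\mathcal{L}_v\nu_Q$ is once more a basic $q$-form, hence equals $f\,\nu_Q$ for a basic function $f$; comparing with (\ref{div_vol}) applied to $D=Q$ identifies $f=\mathrm{div}^Q v$. This yields $\mathcal{L}_v\nu_Q=(\mathrm{div}^Q v)\,\nu_Q$ with $\mathrm{div}^Q v\in C^\infty_b(M)$.

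Now I would bring in the density of leaves: a basic function, being constant along each leaf, is constant on the closure of a dense leaf, i.e. constant on $M$. Hence for every basic $v$ the function $\mathrm{div}^Q v$ is a constant $c_v$, and $\mathcal{L}_v\nu_Q=0$ if and only if $c_v=0$ if and only if $\mathrm{div}^Q v\equiv 0$. Both implications then follow immediately. If $\mathcal F$ is taut, Corollary \ref{taut characterization} forces each $\mathrm{div}^Q v$ to vanish identically or to take both signs; a constant cannot take both signs, so $\mathrm{div}^Q v\equiv 0$ and $\mathcal{L}_v\nu_Q=0$. Conversely, if $\mathcal F$ is not taut, Theorem \ref{characterization} produces a basic $v$ with $\mathrm{div}^Q v\ge 0$ and $\mathrm{div}^Q v>0$ at some point; constancy gives $c_v>0$, so $\mathcal{L}_v\nu_Q=c_v\,\nu_Q\ne 0$, contradicting (\ref{solenoidal}). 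Therefore the validity of (\ref{solenoidal}) for all basic $v$ forces tautness.

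The only genuinely delicate point is the second paragraph — proving cleanly that $\mathcal{L}_v\nu_Q=(\mathrm{div}^Q v)\,\nu_Q$ with a \emph{basic} coefficient. The reduction to $v\perp T\mathcal F$ and the invariance $\mathcal{L}_v\Omega_b(M)\subseteq\Omega_b(M)$ must be handled carefully, using that $v$ basic makes $[v,U]$ leafwise for leafwise $U$, and that $\nu_Q$ is basic because the transverse metric of a bundle-like metric is holonomy invariant. Once that identity is in place, the remaining ingredients — constancy of basic functions under the density hypothesis, and the sign bookkeeping via Theorem \ref{characterization} and Corollary \ref{taut characterization} — are routine.
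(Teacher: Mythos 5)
Your proposal is correct and takes essentially the same route as the paper's proof: the identity $\mathcal{L}_v\nu_Q=(\mathrm{div}^Q v)\,\nu_Q$ obtained from (\ref{div_vol}) together with the agreement of $\mathcal{L}_v$ and $\theta_v$ on basic forms, constancy of the basic function $\mathrm{div}^Q v$ from the density of the leaves, and the sign argument via Theorem \ref{characterization} and Corollary \ref{taut characterization}. You simply make explicit some details the paper leaves implicit (basicness of $\nu_Q$, the reduction to $v\perp T\mathcal{F}$, and the invariance of $\Omega_b(M)$ under $\mathcal{L}_v$).
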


\begin{proof} From the defining property of the basic vector field (\ref{basic vector field}) it is easy to see that the differential operators $\mathcal{L}_{{v}}$ and $\theta _{{v}}$ agree on $\Omega _b\left( M\right) $ for ${v}$
basic. As the foliation has dense leaves, the smooth basic function $\mathrm{div}^Q {v}$ is constant, then in accordance with Corollary \ref{taut characterization} $\mathrm{div}^Q {v}$ vanishes on $M$. The conclusion comes
now from the relation (\ref{div_vol}), written for the transverse
distribution $Q$.
\end{proof}

\begin{remark}
Consequently, the image of any projectable vector field on a local transversal $T$ is \emph{solenoidal} and \emph{volume preserving} (see \textrm{e.g.} \cite[page 140]{Marsden-Ratiu}).
\end{remark}

\section*{Acknowledgements}

The author would like to thank J.~A. \'Alvarez L\'opez for helpful conversations.
He also would like to thank the referee for corrections and recommendations that improved the quality of the paper.
This work was partially supported by the UE grant FP7-PEOPLE-2012-IRSES-316338.

\end{document}